\documentclass{article}
\usepackage{amssymb,latexsym,amsfonts,amsmath,amsthm}
\usepackage{graphics,graphicx,epstopdf}
\usepackage{enumitem}
\usepackage[colorlinks=true,linkcolor=black,citecolor=blue,linktocpage=true]{hyperref}

\newtheorem{theorem}{Theorem}[section]
\newtheorem{lemma}[theorem]{Lemma}

\newtheorem*{remark}{Remark}

\newcommand{\N}{\mathbb{N}}

\newcommand{\1}{\mathbf{1}}
\newcommand{\dual}[1]{ {#1}^{\ast} }

\begin{document}
\title{Projectional Skeletons of Fourier Algebras}
\author{Onur Oktay}
\maketitle

\begin{abstract}
The preduals of $W^*$-algebras are 1-Plichko spaces \cite{BohataHamhalterKalenda16}. A natural question arises: does every predual possess a projectional skeleton (PS) $\{P_s:s\in J\}$ such that each $P_s^*$ is a conditional expectation? In this note, we answer this question affirmatively for the preduals of the group von Neumann algebras of locally compact groups.
\end{abstract}
%


\section{Introduction}\label{sec:intro}

A \emph{projectional skeleton (PS)} on a Banach space $X$ is a family of uniformly bounded linear projections $\{P_s: s\in J\}$ indexed by a directed set $J$ satisfying
\begin{enumerate}
\item $P_s(X)$ is a separable subspace of $X$ for all $s\in J$.
\item If $s\leq t$, then $P_s = P_sP_t = P_tP_s$.
\item If $(s_n)$ is an increasing sequence in $J$, then $\sup s_n = s\in J$ exists and \\
$\displaystyle P_sx = \lim_{n\to\infty} P_{s_n}x$ for each $x\in X$.
\item $\displaystyle \lim_{s\in J} P_sx = x$ for each $x\in X$.
\end{enumerate}
$\{P_s: s\in J\}$ is a $1$-PS if $\|P_s\|=1$ for all $s\in J$. 
$\{P_s: s\in J\}$ is \emph{commutative} if $$P_sP_t = P_{t\wedge s}=P_tP_s\hspace{4mm}\forall s,t\in J.$$
A Banach space $X$ is a ($1$-)Plichko space if it possesses a commutative ($1$-)PS.
$X$ is ($1$-)Plichko if and only if $X$ has a countably ($1$-)norming Markushevich basis.
For a Banach space,
\begin{center}
Separable $\Rightarrow$ WCG $\Rightarrow$ WLD $\Rightarrow$ Plichko.
\end{center}
Here, WCG stands for weakly compactly generated, and WLD stands for weakly Lindel\"{o}f determined. Please see \cite[Chapter 6]{DevilleGodefroyZizler93}, 
\cite{Kalenda99-2,Kalenda99-3,Kalenda00,Kubis09,HMVZ08,Kalenda20} and \cite[Chapter 17-19]{KakolKubisPellicer11} for a comprehensive treatment of Plichko spaces.

Let $M$ be a $W^*$-algebra with predual $M_*$. Let $P:M_*\to M_*$ be a bounded linear projection, $X=P(M_*)$, and $N=P^*(M)$. Clearly, $\dual{X}$ is linearly weak$^*$ isomorphic to $N$ via the dual of $P$ considered as $P:M_*\to X$.

If $P$ is contractive and $N$ is a $W^*$-subalgebra, then $P^*$ is a normal conditional expectation \cite{Tomiyama57}, in which case $P$ is completely contractive.
Generally, if $P$ is contractive, then $N$ is a JB$W^*$-triple with the triple product $$\{x,y,z\} = \frac{1}{2}P^*(xy^*z + zy^*x)$$ for $x,y,z\in N$ \cite[Theorem 2]{FriedmanRusso85}.
If $P$ is completely contractive, then $N$ is a $W^*$-TRO with the triple product $\{x,y,z\} = P^*(xy^*z)$ \cite{Youngson83}. 

Conversely, suppose $X$ is a closed subspace of $M_*$. If $X$ is isometrically isomorphic to a $W^*$-algebra predual, then there exists a contractive projection $P:M_*\to M_*$ onto $X$ \cite{Kirchberg93}. If $X$ is completely isometrically isomorphic to a $W^*$-TRO predual, then there exists a completely contractive projection $P:M_*\to M_*$ onto $X$ \cite{NgOzawa02}. 

Hence, the projectional structure of $M_*$ and the algebraic structure of $M$ are strongly related. Regarding the projectional structure, it was shown that the preduals of $W^*$-algebras (generally JB$W^*$-triples) are 1-Plichko spaces \cite{BohataHamhalterKalenda16,BohataHamhalterKalendaPeraltaPfitzner17}. Although $1$-PS is in harmony with the JB$W^*$-triple structure of $M$, one could expect more from the PS of a $W^*$-algebra predual. Hence, a natural question arises: does $M_*$ have a 1-PS $\{P_s:s\in J\}$ such that each $P_s^*$ is a conditional expectation?

In this article, we provide a positive answer to this question for the preduals of the group von Neumann algebras of locally compact groups. In Section~\ref{sec:prelim} we collect preliminaries that we will need for the proof of our main result Theorem~\ref{thm:main}. In Section~\ref{sec:main}, we present and prove Theorem~\ref{thm:main}.


\section{Preliminaries}\label{sec:prelim}

We use the definitions and notation in \cite{Eymard64,KaniuthLau18} related to the Fourier algebra $A(G)$, Fourier-Stieltjes algebra $B(G)$, and the von Neumann algebra $VN(G)$ of a locally compact group (LCG) $G$. We refer the reader to \cite{KaniuthLau18} for the properties of these algebras that we use but do not provide a proof in this text.

\subsection{Structure of LCG}

The structure of LCG is described by the strong form of the Gleason-Yamabe theorem, e.g. \cite[Theorem 6.0.11]{gsm153}.
\begin{theorem}[Gleason-Yamabe]\label{thm:Gleason-Yamabe}\normalfont
Let $G$ be a LCG. There exists an open subgroup $O$ with the property that every neighborhood of the identity contains a compact normal subgroup $K$ of $O$ such that $O/K$ is a Lie group.
\end{theorem}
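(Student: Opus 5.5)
The statement is the strong form of the Gleason--Yamabe theorem, which is deep; in the paper I would cite \cite[Theorem 6.0.11]{gsm153} rather than reprove it. If a proof had to be outlined, I would follow the standard route that resolves Hilbert's fifth problem. It has three stages: produce the open subgroup $O$, show that groups with no small subgroups are Lie, and then quotient out small compact normal subgroups.

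\textbf{Step 1 (choosing $O$).} By van Dantzig's theorem, the totally disconnected locally compact group $G/G_0$ has a compact open subgroup, where $G_0$ is the identity component. I would let $O$ be its preimage in $G$. Then $O$ is an open subgroup of $G$ and $O/O_0$ is compact, so $O$ is almost connected. It then suffices to prove the following claim: every almost connected locally compact group $O$ has, inside each neighborhood $U$ of the identity, a compact normal subgroup $K$ with $O/K$ a Lie group.

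\textbf{Step 2 (NSS groups are Lie).} Call a locally compact group NSS (``no small subgroups'') if some neighborhood of the identity contains no nontrivial subgroup. The core result is that every NSS locally compact group is a Lie group. I would argue in the usual way.
\begin{enumerate}
\item Use Gleason's construction of a ``good'' escape norm, built by convolving with bump functions. This gives the commutator estimate and the estimate $\|g^n\|\approx n\|g\|$ for small $g$.
\item Deduce that one-parameter subgroups form a locally compact real vector space $L(G)$ under $(X+Y)(t)=\lim_n (X(t/n)Y(t/n))^n$.
\item Deduce that $L(G)$ is finite dimensional, using local compactness.
\item Show that $\exp$ is a local homeomorphism.
\end{enumerate}
From the last point, the smooth structure and then the Lie structure follow, for instance via Cartan's closed subgroup theorem applied to the adjoint representation.

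\textbf{Step 3 (the Gleason--Yamabe reduction).} Given an almost connected $O$ and a neighborhood $U$ of the identity, I would produce a compact normal $K\subset U$ such that $O/K$ is NSS. Step 2 then shows $O/K$ is Lie. The tool is the Peter--Weyl theorem, applied to a compact open-ish piece of $O$, or more precisely to the Haar-measure convolution operators on $L^2(O)$. From it one obtains finite-dimensional unitary-type representations separating points near the identity. Their kernels give compact normal subgroups, and finite-dimensional unitary groups are NSS. The almost connectedness from Step 1 is needed to control the normality of $K$ in all of $O$: conjugation by the compact quotient $O/O_0$ can be averaged.

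The main obstacle is Step 3 combined with Step 2 in the non-compact case. There one cannot simply take kernels of unitary representations of $O$; one needs Yamabe's argument that a locally compact group is NSS modulo arbitrarily small compact normal subgroups. I would first try the Tao--Hilbert-fifth-problem approach. It runs an induction on a ``dimension'' of approximate groups, extracting successively smaller compact normal subgroups via the escape-norm machinery and a Gleason-type lemma, and terminating because the dimension is finite.
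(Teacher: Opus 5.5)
Your main plan matches the paper exactly: cite the theorem, and take $O$ to be the preimage of a compact open subgroup of $G/G_0$ (your Step 1). The paper offers no proof beyond that. Steps 1 and 2 are also fine as an outline. Read as a proof, however, your outline has a genuine gap in Step 3, which is where all the difficulty beyond Gleason's NSS theorem lies.

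\textbf{Why Step 3 fails as written.} The Peter--Weyl mechanism needs a compact group.
\begin{itemize}
\item A ``compact open-ish piece'' of $O$ is not a group.
\item When $O$ is non-compact, no nonzero convolution operator on $L^2(O)$ is compact, so no finite-dimensional invariant subspaces appear.
\item For the connected group $O=SL_2(\mathbb{R})\times\prod_{n\in\N}SO(3)$, every finite-dimensional unitary representation is trivial on $SL_2(\mathbb{R})$. So no intersection of such kernels is compact, even though $K=\{e\}\times\prod_{n>m}SO(3)$ does satisfy the theorem.
\item ``Averaging conjugation by $O/O_0$'' is not a meaningful operation: $O_0$ does not act trivially by conjugation, and subgroups cannot be averaged.
\end{itemize}
Your fallback, that a locally compact group is NSS modulo arbitrarily small compact normal subgroups, is essentially the statement being proved, so invoking it is circular. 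As phrased it is also false. In $(\prod_{\mathbb{Z}}\mathbb{Z}/2)\rtimes\mathbb{Z}$ with the shift action, the only compact normal subgroup inside a small neighbourhood of $e$ is trivial, yet the group has arbitrarily small subgroups. Passing to an open subgroup is therefore unavoidable. Approximate groups play no role in proving this theorem.

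\textbf{The missing idea: subgroup trapping.} This is the hard analytic step, proved with Gleason-type convolution estimates of the kind in your Step 2. It says that, after passing to an open subgroup, there is a neighbourhood $V$ of $e$ whose subgroups all lie in a single compact subgroup $H$. The rest of the argument then goes as follows.
\begin{enumerate}
\item Apply Peter--Weyl only to $H$. This gives $N\trianglelefteq H$ with $N\subseteq U$ and $H/N$ Lie.
\item Get normality from NSS, not from averaging. For $g$ near $e$, $gNg^{-1}$ is a subgroup inside $V$, hence inside $H$, and it lies in a small neighbourhood of $N$. Since $H/N$ has no small subgroups, $gNg^{-1}\subseteq N$.
\item Hence the normalizer $G'$ of $N$ in that open subgroup contains a neighbourhood of $e$, so it is open. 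The same trapping shows $G'/N$ is NSS, hence Lie by your Step 2.
\item To get your almost-connected form, with $K$ normal in all of $O$, note that $G'\cap O$ is open and contains $O_0$. Since $O/O_0$ is compact, $G'\cap O$ has finite index in $O$.
\item Let $C$ be the normal core of $G'\cap O$ in $O$, and intersect the finitely many $O$-conjugates of $N\cap C$. The intersection is compact, normal in $O$ and contained in $U$. Modulo it, $C$ injects continuously into a finite product of Lie groups, so $C$ modulo it is NSS and hence Lie. It is an open subgroup of $O$ modulo the intersection, so that quotient is Lie too.
\end{enumerate}
Steps 4 and 5 need only the conclusion of the cited theorem, not its proof. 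They are what actually justify the paper's remark that $O=q^{-1}(\tilde O)$ may be chosen.
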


Let $G_0$ denote the connected component of $G$, and $q:G\to G/G_0$ be the quotient homomorphism. $G/G_0$ is a totally disconnected LCG, so contains a compact open subgroup $\tilde{O}$ by van Dantzig's theorem. Looking closer at the proof of Theorem~\ref{thm:Gleason-Yamabe}, we can choose $O=q^{-1}(\tilde{O})$, so $O/G_0$ is compact.

Given $K$ as in Theorem~\ref{thm:Gleason-Yamabe}, let $(O/K)_0$ denote the connected component of the identity of $O/K$. Since $O/G_0$ is compact, then so is $(O/K)/(O/K)_0$.
On the other hand, $(O/K)_0$ is a connected Lie group, so it is second countable and open in $O/K$. Being compact and discrete, $(O/K)/(O/K)_0$ is finite. Hence, since $(O/K)_0$ is second countable, $O/K$ is also second countable.

Suppose $K_1$, $K_2$ are two compact normal subgroups such that $O/K_1$ and $O/K_2$ are second countable. Clearly $K_1\cap K_2$ and $K_1K_2$ are also a compact normal subgroups. $O/(K_1\cap K_2)$ is second countable since it is embedded in $O/K_1 \times O/K_2$. $O/(K_1K_2)$ is second countable since the quotient map $q:O/K_1\to O/(K_1K_2)$ is open, continuous, and surjective. Consequently, 
\begin{lemma}\normalfont\label{lem:K-lattice}
Let $O$ be as in Theorem~\ref{thm:Gleason-Yamabe}. 
The set $\mathcal{K}$ of all compact normal subgroups $K$ of $O$, for which $O/K$ is second countable, is a lattice with 
$$K_1\leq K_2\Leftrightarrow K_1\supseteq K_2, \hspace{6mm}
K_1\wedge K_2 = K_1K_2, \hspace{6mm}
K_1\vee K_2=K_1\cap K_2.$$
Furthermore, $\mathcal{K}$ is $\sigma$-join complete, i.e., every countable increasing chain in $\mathcal{K}$ has a supremum in $\mathcal{K}$.
\end{lemma}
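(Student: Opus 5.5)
The lattice part is essentially already established in the paragraph preceding the statement, so the plan is to package it and then concentrate on $\sigma$-join completeness. First, I will check that $\mathcal{K}$ is nonempty by Theorem~\ref{thm:Gleason-Yamabe}, using the discussion showing that $O/K$ is second countable. I will also check that the reverse inclusion is a partial order. Then I will verify the lattice operations.

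Given $K_1,K_2\in\mathcal{K}$, the product $K_1K_2$ is a compact normal subgroup: it is the image of the compact set $K_1\times K_2$ under multiplication, and it is a subgroup because $K_1$ is normal. The preceding paragraph shows that $O/(K_1K_2)\in\mathcal{K}$. In reverse inclusion, $K_1K_2$ is the largest set contained in neither direction below both, so it is the greatest lower bound: any $K\in\mathcal{K}$ with $K\le K_1,K_2$ means $K\supseteq K_1,K_2$, hence $K\supseteq K_1K_2$. Dually, $K_1\cap K_2$ is compact and normal, and $O/(K_1\cap K_2)$ embeds continuously and injectively, as a topological group, into $O/K_1\times O/K_2$. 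Here I should note that this map is a homeomorphism onto its image. This holds because a compact $K_1\cap K_2$ together with the map $O/(K_1\cap K_2)\to O/K_1$ having compact kernel $K_1/(K_1\cap K_2)$ makes the map proper. Alternatively, second countability passes to the image, and an injective continuous homomorphism that is proper is a homeomorphism onto its image. Any $K\le$-above both must be contained in both, so $K_1\cap K_2$ is the least upper bound.

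For $\sigma$-join completeness, let $K_1\supseteq K_2\supseteq\cdots$ be an increasing chain in $\mathcal{K}$, and set $K=\bigcap_n K_n$. Then $K$ is a compact normal subgroup. The main step is second countability of $O/K$. I plan to show that the natural map
\[
\iota:O/K\to\prod_n O/K_n
\]
is a continuous injective homomorphism and a homeomorphism onto its image. The countable product of second countable spaces is second countable, and subspaces inherit this property, so this suffices.

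Injectivity is immediate from $K=\bigcap K_n$. For the embedding, I will use compactness: if $U$ is an open neighborhood of $K$ in $O$, then the compact sets $K_n\setminus U$ decrease and have empty intersection. Hence $K_n\subseteq U$ for some $n$. Consequently, the sets $VK_n$, with $V$ a neighborhood of $e$, form a neighborhood base at $K$ modulo $K$. This means that the preimages of neighborhoods in $O/K_n$ form a base at the identity of $O/K$, which is exactly what is needed for $\iota$ to be open onto its image.

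I expect this compactness argument to be the only delicate point. Once it is in place, $K\in\mathcal{K}$, and $K$ is clearly the supremum of the chain in the reverse-inclusion order.
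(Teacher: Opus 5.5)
Your proof is correct and follows essentially the paper's route: you map $O/K$ into $\prod_n O/K_n$, get injectivity from $K=\bigcap_n K_n$, and use the decreasing compact sets to find $K_N\subseteq q^{-1}(E)$ for openness onto the image. You also rightly skip identifying the image with the inverse limit, which is not needed for second countability. Your openness step, choosing an open neighborhood $V$ of $e$ with $VK_n\subseteq U$, is in fact the sound version of the paper's argument. The paper's set $\{(x_nK_n)\in P: x_N\in K_N\}$ pins the $N$th coordinate to a single point, so it is not open unless $O/K_N$ is discrete. Apart from small wording slips (e.g. $O/(K_1K_2)\in\mathcal{K}$ should read $K_1K_2\in\mathcal{K}$), the proposal is fine.
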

\begin{proof}
$\mathcal{K}$ is a lattice by the preceding paragraph. 
For $\sigma$-join completeness, let $(K_n)_{n\in\N}$ be a sequence of compact normal subgroups such that $K_n\supseteq K_{n+1}$ for all $n\in\N$. Let $\displaystyle K = \bigcap K_n$. We will show that $O/K$ is homeomorphically isomorphic to the inverse limit 
$$P = \left\{(x_nK_n)_{n\in\N}\in\prod_{n\in\N} O/K_n : \phi_n(x_{n+1}K_{n+1}) = x_nK_n \hspace{4mm}\forall n\in\N\right\}$$
where $\phi_n: O/K_{n+1}\to O/K_n$ are the quotient homomorphisms. \\ Note that since each $O/K_n$ is second countable, so is $\prod_{n\in\N} O/K_n$, and so is $P$.

Clearly $K$ is a compact normal subgroup.
Let $q:O\to O/K$ and \\ $q_n:O\to O/K_n$ denote the quotient maps.

Define the map $f:O/K\to\prod O/K_n$ by $f(xK) = (xK_n)_{n\in\N} = (q_n(x))_{n\in\N}$.
$$xK = yK\Rightarrow \forall n\in\N\hspace{4mm}y^{-1}x\in K_n\Rightarrow f(xK) = f(yK)$$
so $f$ is well-defined.
$$f(xK) = f(yK)\Rightarrow\forall n\in\N\hspace{4mm} xK_n = yK_n\Rightarrow y^{-1}x\in K\Rightarrow xK = yK$$ 
so $f$ is injective.

In order to show $f(O/K)=P$, let $(x_nK_n)\in P$. 
$$x_{n+1}K_{n+1}\supseteq x_{n+1}K_n = \phi_{n+1}(x_{n+1}K_{n+1}) = x_nK_n\hspace{4mm}\forall n\in\N$$ 
Since each $K_n$ is compact, $\bigcap x_nK_n \neq\varnothing$. Then, $f(xK) = (x_n K_n)$ for $x\in\bigcap x_nK_n$. Thus, $P\subseteq f(O/K)$. The reverse inclusion $P\subseteq f(O/K)$ is clear. Hence, $f(O/K)=P$.

$f$ is continuous since each $q_n$ is continuous.
Next, let $E\subseteq O/K$ be an open neighborhood of identity. Then $q^{-1}(E)\subset O$ is open, and $\bigcap K_n = K\subset q^{-1}(E)$. Thus,
$$\exists N\in\N\hspace{4mm}K_N\subset q^{-1}(E).$$ 
%
$V=\{(x_nK_n)\in P: x_N\in K_N\}$ is an open neighborhood of identity in $P$, and $V\subseteq f(E)$.
Consequently, $f$ is an open map.
\end{proof}

\begin{lemma}\normalfont\label{lem:J-lattice}
Let $O$ be as in Theorem~\ref{thm:Gleason-Yamabe}. The set $\mathcal{J}$ of all subgroups $B$ of $G$, for which $B\supseteq O$ and $B/O$ is countable, is a $\sigma$-join complete lattice with 
$$B_1\leq B_2\Leftrightarrow B_1\subseteq B_2, \hspace{6mm}
B_1\wedge B_2 = B_1\cap B_2,$$
$B_1\vee B_2$ is the smallest subgroup that contains both $B_1$ and $B_2$.
\end{lemma}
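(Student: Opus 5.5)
Since the order on $\mathcal{J}$ is inclusion, the plan is to check three things: that $B_1\cap B_2$ and the subgroup $\langle B_1\cup B_2\rangle$ lie in $\mathcal{J}$, and that $\mathcal{J}$ is closed under countable increasing unions. Once membership is known, $B_1\cap B_2$ is the infimum and $\langle B_1\cup B_2\rangle$ is the supremum, because these are the infimum and supremum among all subgroups. Every $B\in\mathcal{J}$ contains the open subgroup $O$, so $B=\bigcup_{b\in B}bO$ is an open, and hence closed, subgroup of $G$. Moreover $B/O$ is simply the set of left cosets $bO$ with $b\in B$, which is a subset of $G/O$.

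The intersection is immediate. $B_1\cap B_2$ is a subgroup containing $O$, and its coset space $(B_1\cap B_2)/O$ injects into $B_1/O$, so it is countable.

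The join is the only step that needs an argument, because a subgroup generated by countably many cosets of $O$ must be shown to have countably many cosets of $O$. Choose countable sets $S_1\subseteq B_1$ and $S_2\subseteq B_2$ of coset representatives, so that $B_i=S_iO$. Let $H$ be the subgroup generated by $S_1\cup S_2\cup O$. We want to show that $H/O$ is countable. Write $B=\langle B_1\cup B_2\rangle$. Every element of $B$ is a finite product $b_1b_2\cdots b_k$ with each $b_j\in B_1\cup B_2$. Each factor can be written $b_j=s_jo_j$ with $s_j\in S_1\cup S_2$ and $o_j\in O$. The difficulty is that the elements of $O$ cannot simply be moved to the right end, since $O$ need not be normal.

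Instead I will argue by induction on $k$ that the set $C_k$ of cosets $b_1\cdots b_kO$ is countable. For $k=1$, $C_1\subseteq B_1/O\cup B_2/O$, which is countable. For the inductive step, $b_1\cdots b_{k+1}O=b_1\cdots b_k\,(b_{k+1}O)$, and $b_{k+1}O$ is one of countably many cosets $sO$ with $s\in S_1\cup S_2$. For a fixed $s$, the map $xO\mapsto xsO$ is not well defined on $G/O$. So instead I use the fact that $sO\subseteq B_i$ and that $B_i$ is a subgroup, hence $B_iO=B_i$. The right way to parametrize is then by the products themselves: each coset in $C_{k+1}$ has the form $xO$ with $x\in$ (some element of $C_k$)$\cdot B_i$.

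It is therefore cleaner to view this on the right. For $y\in B_i$, the set $yO$ is a union of at most countably many right cosets $Oz$: indeed $O\subseteq B_i$, and $B_i$ is a union of countably many left cosets of $O$, hence (via inversion) of countably many right cosets of $O$. It follows that $OyO\subseteq B_i=\bigcup_{s\in S_i}sO$, so $OyO$ is a union of countably many left cosets of $O$. Hence for any $xO$ and any $y\in B_i$, the set $xOyO$ is a countable union of left cosets. Since $x\,O\,B_i\subseteq xB_i$ and $xB_i$ is the union of the countably many cosets $xsO$ with $s\in S_i$, the element $b_1\cdots b_{k+1}$ lies in $(b_1\cdots b_k)B_i$. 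As $b_1\cdots b_k$ ranges over representatives of the countably many cosets in $C_k$, we get $b_1\cdots b_kO B_i= b_1\cdots b_k B_i$, which is a countable union of left cosets, independent of the chosen representative. Therefore $C_{k+1}$ is countable, and $B/O=\bigcup_k C_k$ is countable.

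For $\sigma$-join completeness, let $B_1\subseteq B_2\subseteq\cdots$ be a chain in $\mathcal{J}$. Its union is a subgroup containing $O$, and its coset space is the countable union of the countable sets $B_n/O$, so it is countable. It is clearly the supremum of the chain. I expect the main obstacle to be the bookkeeping in the join step caused by the non-normality of $O$, which I handle through the identity $xOB_i=xB_i$ shown above.
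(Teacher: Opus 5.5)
Your proof is correct, and for the join it takes a different route from the paper.

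\textbf{Comparison with the paper.} The paper counts via a free product. It maps $(B_1*B_2)/O$ onto $(B_1\vee B_2)/O$, and it injects $(B_1*B_2)/O$ into the countable set $\bigcup_{n}(B_1/O)^n\times\bigcup_{n}(B_2/O)^n$.

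You instead induct on word length, and the whole step rests on one observation. For $x\in G$, the set $xB_i$ depends only on $xO$ (because $OB_i=B_i$), and it equals $\bigcup_{s\in S_i}xsO$. So after fixing a representative $x_c$ of each $c\in C_k$, you get $C_{k+1}\subseteq\{x_csO : c\in C_k,\ s\in S_1\cup S_2\}$. Equivalently, absorbing each $o_j$ into the next factor puts every element of $B_1\vee B_2$ into some coset $s_1\cdots s_kO$ with all $s_j\in S_1\cup S_2$.

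What each approach buys:
\begin{itemize}
\item Your version is elementary and deals explicitly with the non-normality of $O$.
\item The paper's version is shorter, but its injection only makes sense if $*$ is read as the free product amalgamated over $O$. In the plain free product, with $O$ sitting in the factor $B_1$, the cosets $gO$ for $g\in B_2$ are pairwise distinct. Hence $(B_1*B_2)/O$ is uncountable whenever $O$ is.
\item With amalgamation, the paper's argument relies on the normal form theorem. The existence half of that theorem is precisely your induction.
\end{itemize}
Your treatment of meets and of increasing unions is the same as the paper's.

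\textbf{A passage to delete.} You claim that for $y\in B_i$ the set $yO$ is a union of countably many right cosets $Oz$. This is false in general: a left coset $yO$ is a union of right cosets only when $OyO=yO$, i.e.\ when $y^{-1}Oy\subseteq O$. What your reasoning actually shows is only that $yO$ is covered by countably many right cosets.

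The subsequent ``It follows that $OyO\subseteq B_i$'' does not follow from that claim. It is true, but simply because $O\cup\{y\}\subseteq B_i$ and $B_i$ is a subgroup. Nothing later uses this paragraph, nor the subgroup $H$, so removing both leaves a complete argument.
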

\begin{proof}
Let $B_1*B_2$ denote the free product of $B_1$ and $B_2$. 
First, there is a surjective map $(B_1*B_2)/O\to (B_1\vee B_2)/O$, 
and an injective map $(B_1*B_2)/O\to \bigcup_{n\in\N}(B_1/O)^n \times\bigcup_{n\in\N} (B_2/O)^n$ which is countable. Thus, $(B_1\vee B_2)/O$ is countable.

Second, let $(B_n)_{n\in\N}$ be an increasing sequence of subgroups. Then, 
$\bigvee_{n\in\N} B_n = \bigcup_{n\in\N} B_n$ 
and
$(\bigcup_{n\in\N} B_n)/O = \bigcup_{n\in\N} B_n/O$ is countable
\end{proof}

\begin{lemma}\label{lem:Haar-limit}\normalfont
Let $G$ be a LCG, and $(K_i)_{i\in I}$ be a net of compact groups such that $i\leq j\Rightarrow K_i\supseteq K_j$.
Let $\mu_i$ be the normalized Haar measure on $K_i$. 
Let $K=\bigcap_{i\in I} K_i$ and $\mu$ be the normalized Haar measure on $K$. 
Then $\mu_i\to\mu$ in weak$^*$ topology of $M(G)$.
\end{lemma}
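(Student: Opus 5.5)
We need $\int f\,d\mu_i\to\int f\,d\mu$ for $f\in C_0(G)$. Here "weak$^*$" is taken as the topology $\sigma(M(G),C_0(G))$; since the $\mu_i$ are probability measures all supported in the compact set $K_{i_0}$, this is the same as convergence against $C_b(G)$. Fix some $i_0\in I$ and restrict attention to $i\geq i_0$. Then all $\mu_i$ and $\mu$ live on the compact group $K_{i_0}$, so it suffices to work in the set $\mathcal{P}(K_{i_0})$ of probability measures on $K_{i_0}$. This set is weak$^*$ compact by Banach--Alaoglu and the Riesz representation theorem.

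The strategy is a compactness and uniqueness argument. Let $\nu$ be any weak$^*$ cluster point of the net $(\mu_i)$ in $\mathcal{P}(K_{i_0})$. It is enough to show that $\nu=\mu$, because a net in a compact space whose only cluster point is $\mu$ converges to $\mu$.

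The proof that $\nu=\mu$ has three steps.

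\textbf{Step 1: support.} The support of $\nu$ lies in $K$. For each fixed $j$, the measures $\mu_i$ with $i\geq j$ are supported in the closed set $K_j$. Weak$^*$ limits of probability measures supported in a closed set are again supported there: if $f\geq 0$ vanishes on $K_j$, then $\int f\,d\nu=\lim\int f\,d\mu_i=0$ along the subnet. Hence $\nu$ is supported in $\bigcap_j K_j=K$, and $\nu$ is a probability measure on $K$.

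\textbf{Step 2: invariance.} Fix $k\in K$ and $f\in C(K_{i_0})$. Since $k\in K_i$ for every $i$, left invariance of Haar measure on $K_i$ gives $\int f(kx)\,d\mu_i(x)=\int f(x)\,d\mu_i(x)$. The function $x\mapsto f(kx)$ is again continuous on $K_{i_0}$. Passing to the limit along the subnet converging to $\nu$ therefore gives $\int f(kx)\,d\nu(x)=\int f\,d\nu$. So $\nu$ is a left-invariant probability measure on $K$.

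\textbf{Step 3: uniqueness.} By uniqueness of normalized Haar measure on the compact group $K$, we conclude $\nu=\mu$.

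The remaining technical point is the passage from $C(K_{i_0})$ to $C_0(G)$. Any $f\in C_0(G)$ restricts to a function in $C(K_{i_0})$, and integrals against measures supported in $K_{i_0}$ depend only on this restriction. Conversely, every function in $C(K_{i_0})$ extends to a function in $C_c(G)$ by Tietze's theorem, which is valid here because $G$ is locally compact Hausdorff. So the two notions of convergence agree.

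I expect the main obstacle to be Step 1, namely that the support of the limit lies in $K$. One has to apply Urysohn-type functions carefully, and one has to use the directedness of $I$ so that cofinally many $\mu_i$ are supported in each given $K_j$. The remaining steps are routine.
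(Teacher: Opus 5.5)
Your proof is correct and follows the paper's argument. You take a weak$^*$ cluster point $\nu$ of the net (via Banach--Alaoglu), show it is supported in $K=\bigcap_i K_i$ and invariant under left translation by $K$, and conclude $\nu=\mu$ by uniqueness of normalized Haar measure. Two of your refinements usefully tighten it: restricting to the tail $i\geq i_0$ and working in $\mathcal{P}(K_{i_0})$ justifies the paper's ``clearly $\nu(K)=1$'' (no mass can escape in $\sigma(M(G),C_0(G))$), and you state explicitly the step the paper leaves implicit, that a net in a compact space with a unique cluster point converges to it.
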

\begin{proof}
Clearly $K$ is a non-empty compact subgroup of $G$. 
By the Banach-Alaoglu theorem, the unit ball of $M(G)$ is weak$^*$ compact, so the net $(\mu_i)$ has a cluster point $\nu$. Clearly $\nu(K)=1$. WLOG, let $(\mu_i)$ denote the subnet converging to $\nu$.
$$\textrm{supp}(\nu) \subseteq \bigcap_{i\in I} \textrm{supp}(\mu_i) = K$$
since $\textrm{supp}(\mu_i) = K_i$. For any $x\in K$ and $f\in C_c(G)$
$$\int_G f(xy)\ d\nu(y)
= \lim_{i\in I} \int_G f(xy)\ d\mu_i(y) 
= \lim_{i\in I} \int_G f(y)\ d\mu_i(y) 
= \int_G f(y)\ d\nu(y)$$
so $\nu$ is translation invariant.

Consequently, $\nu = \mu$ by the uniqueness of the normalized Haar measure.
\end{proof}

\subsection{Projections on $A(G)$}

$A(G)$ is separable if and only if $G$ is second countable. Putting this into center, we will gather the interplay between $A(G)$ and $A(H)$ when $H$ is an open subgroup or a compact normal subgroup in the following lemmas. 

\begin{lemma}\label{lem:1}\normalfont
Let $G$ be a LCG, $K$ a compact subgroup with Haar measure $\mu$. Let $\pi:G\to B(H)$ be a unitary representation. Then, $Q:H\to H$ defined by $$Q = \int_K \pi(y) d\mu(y)$$ is an orthogonal projection onto $H_K = \{\xi\in H: \pi(a)\xi=\xi\hspace{2mm}\forall a\in K\}$, which satisfies $\pi(x)Q = Q\pi(x)$ for all $x\in N_G(K) = \{x\in G: xKx^{-1}\subseteq K\}$. 

Moreover, if $Q_1, Q_2$ are two projections related to two compact subgroups $K_1$ and $K_2$ such that $K_2\subseteq N_G(K_1)$, then $Q_1Q_2 = Q_2Q_1$ is an orthogonal projection onto $H_{K_1K_2}$.
\end{lemma}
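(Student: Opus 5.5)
The plan is to verify the defining properties of $Q$ directly from the invariance of the normalized Haar measure $\mu$ on $K$, with $Q$ understood as a weak (Bochner in the strong operator topology) integral: $\langle Q\xi,\eta\rangle = \int_K \langle \pi(y)\xi,\eta\rangle\,d\mu(y)$. This is well defined since $y\mapsto\pi(y)$ is strongly continuous and $\mu$ is a probability measure, and it gives $\|Q\|\le 1$.

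\emph{Left absorption and idempotence.} For $a\in K$, left invariance of $\mu$ gives
$$\pi(a)Q=\int_K\pi(ay)\,d\mu(y)=Q.$$
Right invariance (compact groups are unimodular) gives $Q\pi(a)=Q$ in the same way. Integrating $\pi(a)Q=Q$ over $a\in K$ yields $Q^2=Q$.

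\emph{Self-adjointness.} We have
$$Q^*=\int_K\pi(y^{-1})\,d\mu(y)=Q,$$
because $\mu$ is inversion invariant. Hence $Q$ is an orthogonal projection.

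\emph{Range.} The relation $\pi(a)Q=Q$ shows $Q(H)\subseteq H_K$. Conversely, if $\xi\in H_K$, then $Q\xi=\int_K\xi\,d\mu=\xi$, so $H_K\subseteq Q(H)$.

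\emph{Commutation with the normalizer.} Take $x\in N_G(K)$; I will assume the intended meaning is $xKx^{-1}=K$. If $G$ is only assumed to satisfy $xKx^{-1}\subseteq K$, one should note that for compact $K$ this inclusion already forces equality: conjugation by $x$ is a continuous injective endomorphism of $K$, and the standard argument applies. Alternatively, the lemma is only needed for normal $K$, where equality is automatic.

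The map $c_x(y)=xyx^{-1}$ is a topological automorphism of $K$. The pushforward of $\mu$ under $c_x$ is again a normalized left invariant measure on $K$, so it equals $\mu$. Therefore
$$\pi(x)Q\pi(x)^{-1}=\int_K\pi(xyx^{-1})\,d\mu(y)=Q.$$
This is the step I expect to be the only delicate point, namely justifying equality of $xKx^{-1}$ and $K$ and the invariance of $\mu$ under conjugation. Alternatively, one can avoid the integral computation: $\pi(x)$ maps $H_K$ onto $H_{xKx^{-1}}=H_K$, and since $\pi(x)$ is unitary it also preserves $H_K^\perp$, so it commutes with the projection $Q$ onto $H_K$.

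\emph{Second part.} Since $K_2\subseteq N_G(K_1)$, the first part gives $\pi(y)Q_1=Q_1\pi(y)$ for every $y\in K_2$. Integrating over $K_2$ gives $Q_2Q_1=Q_1Q_2$. The product of two commuting orthogonal projections is an orthogonal projection, and its range is $Q_1(H)\cap Q_2(H)=H_{K_1}\cap H_{K_2}$.

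A vector fixed by both $K_1$ and $K_2$ is fixed by every product $k_1k_2$, and conversely a vector fixed by $K_1K_2$ is fixed by $K_1$ and by $K_2$, since both are contained in $K_1K_2$. Hence $H_{K_1}\cap H_{K_2}=H_{K_1K_2}$.

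Finally, $K_1K_2$ is a compact subgroup: it is a subgroup because $K_2$ normalizes $K_1$, and it is compact as the continuous image of $K_1\times K_2$. Thus $H_{K_1K_2}$ is meaningful in the sense of the first part, and $Q_1Q_2$ is the orthogonal projection associated to $K_1K_2$.
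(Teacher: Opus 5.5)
Your argument follows the paper's proof step by step: the weak integral, $\pi(a)Q=Q$ giving $Q^2=Q$, inversion invariance giving $Q^*=Q$, the two inclusions for the range, the conjugation computation, and integrating $\pi(y)Q_1=Q_1\pi(y)$ over $K_2$. It is correct when $N_G(K)$ is read as $\{x\in G: xKx^{-1}=K\}$.

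\textbf{The step that fails.} Your claim that for compact $K$ the inclusion $xKx^{-1}\subseteq K$ forces equality is false. Compact groups are not co-Hopfian in general: $w\mapsto pw$ is a continuous injective, non-surjective endomorphism of $\mathbb{Z}_p$. This is realized by conjugation. In $G=\mathbb{Q}_p\rtimes\mathbb{Z}$, with $n$ acting by multiplication by $p^n$, take $K=\mathbb{Z}_p\times\{0\}$ and $x=(0,1)$. Then $xKx^{-1}=p\mathbb{Z}_p\times\{0\}\subsetneq K$.

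For such $x$, the pushforward of $\mu$ under $y\mapsto xyx^{-1}$ is the normalized Haar measure of $xKx^{-1}$, not of $K$. So $\pi(x)Q\pi(x)^{-1}$ is the projection onto $H_{xKx^{-1}}\supseteq H_K$. For $\pi=\lambda$, the vector $\1_{p\mathbb{Z}_p\times\{0\}}\in L^2(G)$ (indicator of a compact open set) lies in $H_{xKx^{-1}}$ but not in $H_K$. Hence, with the literal definition $xKx^{-1}\subseteq K$, the commutation assertion of the lemma is false.

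The paper's appeal to uniqueness of Haar measure tacitly assumes $xKx^{-1}=K$. So does your alternative argument via $\pi(x)H_K=H_{xKx^{-1}}$. The first part should be stated for $x$ with $xKx^{-1}=K$.

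\textbf{The second part is unaffected.} This is the observation you needed in place of the false reduction. Suppose $K_2\subseteq N_G(K_1)$ with the definition $yK_1y^{-1}\subseteq K_1$. For $y\in K_2$ we also have $y^{-1}\in K_2$, so $y^{-1}K_1y\subseteq K_1$, i.e.\ $K_1\subseteq yK_1y^{-1}$, whence $yK_1y^{-1}=K_1$. So the corrected first part applies to every $y\in K_2$. Your proofs of $Q_1Q_2=Q_2Q_1$, of $H_{K_1}\cap H_{K_2}=H_{K_1K_2}$, and that $K_1K_2$ is a compact subgroup then go through as written.

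Your fallback remark about normal $K$ also fits how the lemma is used later. There $K_1,K_2$ are normal in $O$ and $K_2\subseteq O$, so equality holds automatically.
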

\begin{proof}
The operator $Q$ is defined in the weak sense: for any $\xi,\eta\in H$
$$\langle Q\xi,\eta\rangle = \int_K \langle\pi(y)\xi,\eta\rangle\ d\mu(y)$$ and since the integrand is a continuous function on a compact set, the integral is well-defined and bounded by $\|\xi\|\|\eta\|$.
%
Next, 
$$Q^2 
= \int_K \int_K \pi(xy)\ d\mu(y)d\mu(x)
= \int_K \int_K \pi(y)\ d\mu(y)d\mu(x)
=Q,$$
and
$$Q^* = \int_K \pi(y)^*\ d\mu(y) = \int_K \pi(y^{-1})\ d\mu(y) = Q.$$
If $\xi\in H_K$, then $Q\xi = \int_K \pi(y)\xi\ d\mu(y) = \xi$, so $\xi\in Q(H)$. 
If $\xi\in Q(H)$, then $\pi(a)\xi = \pi(a)Q\xi = Q\xi = \xi$ for all $a\in K$, so $\xi\in H_K$. Thus $Q(H)=H_K$.

If $x\in N_G(K)$, then by the uniqueness of the Haar measure
$$\pi(x)Q\pi(x)^{-1} 
= \int_{K} \pi(xyx^{-1})\ d\mu(y)
= \int_K \pi(y)\ d\mu(y)
= Q$$
Lastly, if $Q_1,Q_2$ are two projections related to two compact subgroups $K_1,K_2$ such that $K_1\subseteq N_G(K_2)$, then $$Q_1Q_2
= \int_{K_2} Q_1\pi(y)\ d\mu_2(y) 
= \int_{K_2} \pi(y)Q_1\ d\mu_2(y)
= Q_2Q_1.$$
Consequently, $Q_1Q_2$ is an orthogonal projection onto $H_{K_1}\cap H_{K_2} = H_{K_1K_2}$.
\end{proof}

\begin{lemma}\label{lem:2}\normalfont
Let $G$ be a LCG, $K$ be a compact subgroup.
$$A(G)^K = \{u\in A(G) : u(a^{-1}x) = u(x)\hspace{4mm}\forall a\in K\hspace{2mm} \forall x\in G\}$$ 
is a $1$-complemented subspace of $A(G)$. The map $P:A(G)\to A(G)^K$ 
\begin{equation}\label{def:P_K}
Pu(x) = \int_K u(a^{-1}x)\ d\mu(a)
\end{equation}
is a contractive projection, where $\mu$ is the Haar measure on $K$.

If $K$ is also normal, then $A(G)^K$ is isometrically isomorphic to the Fourier algebra of the quotient group $A(G/K)$.

Moreover, if $P_1, P_2$ are two projections related to two compact subgroups $K_1$,$K_2$ satisfying $K_2\subseteq N_G(K_1)$, then $P_1P_2 = P_2P_1$ is a contractive projection onto $A(G)^{K_1K_2}$.
\end{lemma}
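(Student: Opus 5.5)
The plan is to realize $P$ through the Hilbert-space projection of Lemma~\ref{lem:1}, applied to the left regular representation $\lambda$ on $L^2(G)$. Every $u\in A(G)$ has the form $u=\overline{\xi}*\check{\eta}$, that is, $u(x)=\langle\lambda(x)\xi',\eta'\rangle$ for suitable $\xi',\eta'\in L^2(G)$ with $\|u\|_{A(G)}=\|\xi'\|\|\eta'\|$; this is Eymard's representation, with the norm attained. Left translation by $a^{-1}$ acts on such coefficients by $u(a^{-1}x)=\langle\lambda(x)\xi',\lambda(a)\eta'\rangle$. Integrating against $\mu$ gives
$$Pu(x)=\Big\langle\lambda(x)\xi',\int_K\lambda(a)\eta'\,d\mu(a)\Big\rangle=\langle\lambda(x)\xi',Q\eta'\rangle,$$
where $Q=\int_K\lambda(a)\,d\mu(a)$ is the orthogonal projection of Lemma~\ref{lem:1} (using $Q=Q^*$). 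Hence $Pu\in A(G)$ and $\|Pu\|\le\|\xi'\|\|Q\eta'\|\le\|u\|$, which proves contractivity. One has to check that this formula does not depend on the chosen representation of $u$, but that is automatic because $Pu$ is defined pointwise by \eqref{def:P_K}.

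Next comes the projection property. By left invariance of $\mu$, $Pu(b^{-1}x)=\int_K u(a^{-1}b^{-1}x)\,d\mu(a)=Pu(x)$ for $b\in K$, so $P(A(G))\subseteq A(G)^K$. Conversely, $Pu=u$ whenever $u\in A(G)^K$, since $\mu$ is a probability measure. Thus $P$ is a contractive projection onto $A(G)^K$, and $A(G)^K$ is $1$-complemented.

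For the quotient statement with $K$ normal, the functions in $A(G)^K$ are exactly those constant on cosets $Kx=xK$. The claim is that $u\mapsto\dot u$, where $\dot u(xK)=u(x)$, is an isometry of $A(G)^K$ onto $A(G/K)$. I would cite the standard result \cite{Eymard64} (see also \cite{KaniuthLau18}) that $A(G:K)$, the functions in $A(G)$ constant on cosets of a compact normal $K$, is isometrically isomorphic to $A(G/K)$. A direct proof would go as follows. In the representation $u=\langle\lambda(\cdot)\xi',Q\eta'\rangle$, one may also replace $\xi'$ by $Q\xi'$, using $Q\lambda(x)=\lambda(x)Q$ (valid since $N_G(K)=G$). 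The space $QL^2(G)$ consists of the left-$K$-invariant $L^2$ functions, which can be identified unitarily with $L^2(G/K)$ via Weil's formula, intertwining $\lambda_G$ with $\lambda_{G/K}\circ q$. This gives both norm inequalities. This identification is the main technical point, but it is classical, so I would cite it rather than redo it.

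Finally, for the commutation property, suppose $K_2\subseteq N_G(K_1)$. Then
$$P_1P_2u(x)=\int_{K_1}\int_{K_2}u(b^{-1}a^{-1}x)\,d\mu_2(b)\,d\mu_1(a).$$
Substituting $b\mapsto a^{-1}ba$ is impossible in general, because it requires $K_1\subseteq N_G(K_2)$. So I would argue at the Hilbert space level instead: $P_1P_2u=\langle\lambda(\cdot)\xi',Q_1Q_2\eta'\rangle$ and $P_2P_1u=\langle\lambda(\cdot)\xi',Q_2Q_1\eta'\rangle$. Lemma~\ref{lem:1} gives $Q_1Q_2=Q_2Q_1$, and this product is the projection onto the $K_1K_2$-invariant vectors. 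Hence $P_1P_2=P_2P_1$, which is a contractive idempotent. Its range is contained in $A(G)^{K_1}\cap A(G)^{K_2}=A(G)^{K_1K_2}$, and it fixes that space, so it is a contractive projection onto $A(G)^{K_1K_2}$. Note that $K_1K_2$ is a compact subgroup precisely because $K_2$ normalizes $K_1$.
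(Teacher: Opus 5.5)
Your proposal is correct and follows the paper's proof essentially step for step: you write $Pu=\langle\lambda(\cdot)\xi,Q\eta\rangle$ with the projection $Q$ of Lemma~\ref{lem:1} to get contractivity, you cite the standard identification $A(G)^K\cong A(G/K)$, and you get commutation from $Q_1Q_2=Q_2Q_1$. As a side remark, the direct computation you set aside also works: since $K_2\subseteq N_G(K_1)$, for fixed $b\in K_2$ the substitution $a\mapsto bab^{-1}$ in the $K_1$-variable preserves $\mu_1$ and turns $P_2P_1u$ into $P_1P_2u$, so the Hilbert-space route is convenient but not forced.
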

\begin{proof}
Let $\lambda$ denote the regular representation of $G$. Let $Q:L^2(G)\to L^2(G)$ be defined by 
\begin{equation}
Q = \int_K \lambda(y)\ d\mu(y) \label{def:Q}
\end{equation} 
as in Lemma~\ref{lem:1}, where $\mu$ is the unique normalized Haar measure on $K$. $Q$ is an orthogonal projection satisfying $\lambda(a)Q = Q$ for all $a\in K$ by Lemma~\ref{lem:1}.

Indeed $P(A(G))\subseteq A(G)$. 
For every $u \in A(G)$ there exist $\xi,\eta \in L^2(G)$ such that
$u(x) = \langle \lambda(x)\xi, \eta \rangle$, so
$$Pu(x) 
= \int_K \langle \lambda(a^{-1}x)\xi, \eta \rangle \ d\mu(a)
= \int_K \langle \lambda(x)\xi, \lambda(a)\eta \rangle \ d\mu(a)
= \langle \lambda(x)\xi, Q\eta\rangle
$$  
Further, $Pu\in A(G)^K$ since
$$Pu(a^{-1}x)
= \langle \lambda(a^{-1}x)\xi, Q\eta\rangle
= \langle \lambda(x)\xi, \lambda(a)Q\eta\rangle
= \langle \lambda(x)\xi, Q\eta\rangle
= Pu(x)$$
for all $a\in K$, $x\in G$. Also,
$$ P^2u(x)
= \int_K Pu(a^{-1}x) \ d\mu(a)
= \int_K Pu(x) \ d\mu(a) 
= Pu(x)
$$
Thus, $P$ is a projection onto $A(G)^K$. $\|P\| = 1$ since
$$\|Pu\|_{A(G)} 
\leq \inf\{\|\xi\|\|Q\eta\|: u = \langle\lambda(\cdot)\xi,\eta\rangle\}
\leq \inf\{\|\xi\|\|\eta\|: u = \langle\lambda(\cdot)\xi,\eta\rangle \}
= \|u\|_{A(G)}$$ 

If $K$ is also normal, then let $q:G\to G/K$ be the quotient homomorphism. Clearly, the map $u\to u\circ q$ is an isometric algebra isomorphism from $A(G/K)$ onto $A(G)^K$, see also \cite[Proposition 2.4.2]{KaniuthLau18}.

Finally, if $P_i$ are related to compact normal subgroups $K_i$ satisfying $K_2\subseteq N_G(K_1)$, then for $u(x) = \langle \lambda(x)\xi,\eta \rangle$
$$P_1P_2u(x) 
= \langle \lambda(x)\xi, Q_1Q_2\eta\rangle
= \langle \lambda(x)\xi, Q_2Q_1\eta\rangle
= P_2P_1u(x)
$$ by Lemma~\ref{lem:1}, where $Q_i$ are defined as in \eqref{def:Q}.
\end{proof}

\begin{lemma}\label{lem:3}\normalfont
Let $G$ be a LCG, $H$ be an open subgroup. For $c\in(H\backslash G)$ a right coset, let $P_{c}:A(G)\to A(G)$ be defined by $P_{c}u = \1_{c}u$, where $\1_{c}$ is the characteristic function of $c$. Then, 
\begin{enumerate}[label=\roman*.]
\item \label{itm:lem3_1} $P_H(A(G))$ and $A(H)$ are isometrically isomorphic Banach algebras. 
\item \label{itm:lem3_2} $P_{c}$ is a contractive projection, $P_{H}$ is an algebra homomorphism. 
\item \label{itm:lem3_3} $P_{c_1}P_{c_2} = 0$ if $c_1\neq c_2$. Consequently, for each $u\in A(G)$ and $S\subseteq(H\backslash G)$ 
$$ 
\sum_{c\in S}P_cu = u \1_{\cup S}
\hspace{4mm}\textrm{ and }\hspace{4mm}
\|\sum_{c\in S}P_cu\| = \sum_{c\in S}\|P_cu\|
$$ 
where the sums converge as nets of finite partial sums.

\item \label{itm:lem3_4} $P^*_c:VN(G)\to VN(G)$ is given explicitly by 
$$P_c^*(\lambda(x)) = \left\{
\begin{array}{ll} 
\lambda(x) &\textrm{if } x\in c\\ 
0 &\textrm{otherwise}\\ 
\end{array}\right.$$
where $\lambda$ is the left regular representation of $G$. The range of $P_c^*$ is the weak$^*$ closure of the span of $\{\lambda(a): a\in c\}$, i.e.,
$$P_c^*(VN(G)) = \{T\in VN(G): \textrm{supp}(T)\subseteq c \}$$
 
\item \label{itm:lem3_5} $P^*_H$ is a normal conditional expectation onto the subalgebra generated by $\{\lambda(a): a\in H\}$, which is isomorphic to $VN(H)$. If $c=Hx$, then
$$P_{c}^*(T) = P_H^*\big(T\lambda(x^{-1})\big) \lambda(x)$$ 
\end{enumerate}
\end{lemma}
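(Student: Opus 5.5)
The central tool is the fact that for an open subgroup $H$ of $G$ the indicator $\1_H$ is a positive definite function. Indeed $\1_H(x)=\langle \pi(x)\delta_H,\delta_H\rangle$ for the quasi-regular representation $\pi$ of $G$ on $\ell^2(G/H)$, and $\|\1_H\|_{B(G)}=\1_H(e)=1$. The same holds for right cosets: $\1_{Hx}=\1_{x^{-1}(xHx^{-1})x}$, and since $xHx^{-1}$ is open, $\1_{Hx}$ is a translate of a norm-one positive definite function, so it lies in $B(G)$ with norm $1$. Because $A(G)$ is a closed ideal in $B(G)$ and $\|uv\|_{A}\le\|u\|_{B}\|v\|_{A}$, $P_c u=\1_c u$ belongs to $A(G)$ with $\|P_cu\|\le\|u\|$. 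It is a projection because $\1_c^2=\1_c$. This proves the first half of \ref{itm:lem3_2}. $P_H$ is multiplicative because $\1_H(uv)=(\1_Hu)(\1_Hv)$.

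For \ref{itm:lem3_1}, I will invoke the standard restriction and extension theorem for open subgroups (Herz / \cite{KaniuthLau18}). Restriction $u\mapsto u|_H$ maps $A(G)$ onto $A(H)$ contractively. Extension by zero $v\mapsto v^0$ maps $A(H)$ isometrically into $A(G)$; one checks this via $L^2(H)\subseteq L^2(G)$, since $\lambda_G|_H$ acting on $L^2(H)$ is $\lambda_H$. Then $P_Hu=(u|_H)^0$, so $v\mapsto v^0$ is an isometric algebra isomorphism of $A(H)$ onto $P_H(A(G))$.

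For \ref{itm:lem3_3}, $\1_{c_1}\1_{c_2}=0$ for distinct cosets gives $P_{c_1}P_{c_2}=0$. The additivity of norms is the main obstacle. I would treat it by duality. Take $T_c\in VN(G)$ of norm one with $\langle P_cu,T_c\rangle=\|P_cu\|$, and replace $T_c$ by $P_c^*T_c$, so that $T_c$ is supported in $c$. For finite $S$ set $T=\sum_{c\in S}T_c$. The claim is that $\|T\|\le1$. Each $T_c$ has support in $c$, so in $L^2(G)=\bigoplus_{d\in G/H}L^2(d)$ it maps the left coset $L^2(yH)$ into $L^2(c\,yH)$. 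Writing $c=Hx$, the right coset $Hx$ sends left cosets of the open subgroup $x^{-1}Hx$ appropriately. Hence $T$ acts like a block ``generalized permutation'' matrix: for each fixed input block, distinct $c$ send it to distinct output blocks. Arranged correctly this gives an orthogonal decomposition with $\|T\|=\max_c\|T_c\|$. I expect checking this block structure to be the delicate part; an alternative is to use $P_c^*(T)=P_H^*(T\lambda(x^{-1}))\lambda(x)$ to reduce to operators in $VN(H)\lambda(x)$ and argue as with a crossed-product style decomposition. Once $\|T\|\le1$, we get $\sum\|P_cu\|=\langle\sum P_cu,T\rangle\le\|u\1_{\cup S}\|$, and the reverse inequality is the triangle inequality. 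The partial sums then converge absolutely, with limit $u\1_{\cup S}$ because they converge pointwise.

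For \ref{itm:lem3_4}, we have $\langle u,P_c^*\lambda(x)\rangle=\1_c(x)u(x)$, which gives the formula. Normality and weak$^*$ density of the span of $\lambda(G)$ then identify the range with the weak$^*$-closed span of $\lambda(c)$, which equals $\{T:\mathrm{supp}\,T\subseteq c\}$ by Eymard's support theory. For \ref{itm:lem3_5}, $P_H^*$ is a weak$^*$-continuous contractive idempotent onto the von Neumann algebra generated by $\lambda(H)$. By \ref{itm:lem3_1} this algebra is $VN(H)$, and by Tomiyama \cite{Tomiyama57} $P_H^*$ is a normal conditional expectation. The coset formula needs checking only on $T=\lambda(y)$, where both sides equal $\1_{Hx}(y)\lambda(y)$; it then extends by linearity and weak$^*$ continuity.
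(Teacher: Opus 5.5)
\textbf{Verdict: (i), (ii), (iv) and (v) are sound, but (iii) has a genuine gap that cannot be closed, because the norm identity stated there is false.}

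For (ii), your observation that $\1_H(x)=\langle\pi(x)\delta_H,\delta_H\rangle$ is positive definite is more elementary than the paper's appeal to Host's idempotent theorem and Ilie--Spronk. There is one slip: $x^{-1}(xHx^{-1})x=H$. What you want is $\1_{Hx}=\1_H(\,\cdot\,x^{-1})$, a right translate, which has the same $B(G)$-norm.

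\textbf{The gap in (iii).} It sits exactly at the step you flagged. The claim $\|\sum_{c\in S}T_c\|\le 1$ is false, and no choice of norming functionals can repair it, because the identity $\|\sum_{c\in S}P_cu\|=\sum_{c\in S}\|P_cu\|$ is itself false. Take $G$ discrete, $x\neq e$, and the open subgroup $H=\{e\}$. The cosets are singletons, $P_{\{y\}}u=u(y)\delta_y$, and $\|\delta_y\|_{A(G)}=1$. For $u=\delta_e+\delta_x=\langle\lambda(\cdot)\delta_e,\delta_e+\delta_x\rangle$ you get $\|u\|_{A(G)}\le\sqrt2<2=\|P_{\{e\}}u\|+\|P_{\{x\}}u\|$. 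For $G=\mathbb{Z}$ the exact value is $\|u\|_{A}=\|1+z\|_{L^1(\mathbb{T})}=4/\pi$. Dually, the norming functionals $T_{\{e\}}=I$ and $T_{\{x\}}=\lambda(x)$ sum to $I+\lambda(x)$, which has norm $2$ since $1\in\sigma(\lambda(x))$.

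Your block picture shows why this happens. Each $T_c$ sends an input block to a single output block, and distinct $c$ give distinct outputs. But every output block also receives a contribution from every $c$. So $T$ is not a generalized permutation matrix, and in general only $\|T\|\le\sum_c\|T_c\|$ holds.

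\textbf{The convergence claim in (iii) also fails.} Take $G=\mathbb{Z}$ and $H=\{0\}$. Norm convergence of $u\1_F\to u$ along finite $F$ would force $u\1_{[-N,N]}\to u$ for all $u\in A(\mathbb{Z})\cong L^1(\mathbb{T})$. That is $L^1$-convergence of Fourier partial sums, which Banach--Steinhaus together with $\|D_N\|_{L^1}\to\infty$ rules out. For infinite $S$, even $u\1_{\cup S}\in A(G)$ can fail. The paper offers only ``straightforward duality arguments'' here, so this is a defect of the statement, not something your proof omits.

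\textbf{What survives.} The true version is weaker, and it is all that the separability step of Theorem~\ref{thm:main} actually needs.
\begin{enumerate}[label=(\alph*)]
\item $P_{c_1}P_{c_2}=0$ for $c_1\neq c_2$.
\item The cosets form an open cover, so a compactly supported $u$ meets only finitely many of them. Hence $\1_{\cup S}u=\sum_{c\in S}P_cu$ is a finite sum.
\item If $\cup S$ is an open subgroup $B\supseteq H$, the span of $\bigcup_{c\in S}P_c(A(G))$ is dense in $\1_BA(G)$. This uses that compactly supported functions are dense in $A(G)$ and that $\|\1_B\|_{B(G)}=1$.
\end{enumerate}
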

\begin{remark}
$1.$ Pick $x_c\in c$ from each right coset $c\in H\backslash G$. Then, it is clear from above that $\{\lambda(x_c):c\in H\backslash G\}$ is a Pimsner-Popa basis for the pair $VN(H)\subset VN(G)$.

$2.$ The range of $P^*_c$ is a $W^*$-TRO with the triple product $\{x,y,z\}=xy^*z$, so the projections $P_c$ are not only contractive but completely contractive \cite{NgOzawa02}.
\end{remark}
\begin{proof}
$\1_{c}\in B(G)$ by Host's idempotent theorem \cite{Host86}, and $A(G)$ is an ideal in $B(G)$. Thus $P_c(A(G))\subseteq A(G)$. 
For \eqref{itm:lem3_1} see \cite[Proposition 2.4.1]{KaniuthLau18}. 
\eqref{itm:lem3_2} is clear since $\|\1_c\|=1$ by \cite[Theorem 2.1]{IlieSpronk05}. 
The rest follow by straightforward duality arguments.
\end{proof}

\begin{lemma}\label{lem:4}\normalfont
Let $G$ be a LCG, $H$ be an open subgroup, $K$ be a normal compact subgroup of $H$ with Haar measure $\mu$. Let $P_K:A(G)\to A(G)$ be as in \eqref{def:P_K}.
Let $s=\bigcup_{i\in I}Hx_i$ be a union of right cosets and $\1_s$ be its characteristic function. Then, $ P_K(\1_s u) = \1_s P_Ku$ for all $u\in A(G)$. 
\end{lemma}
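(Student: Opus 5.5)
The plan is a direct pointwise computation using the formula \eqref{def:P_K}. Fix $u\in A(G)$ and $x\in G$. By definition,
$$P_K(\1_s u)(x) = \int_K \1_s(a^{-1}x)\,u(a^{-1}x)\ d\mu(a),$$
and the claim reduces to showing that $\1_s(a^{-1}x)=\1_s(x)$ for every $a\in K$ and every $x\in G$. Once this is established, $\1_s(a^{-1}x)$ can be pulled out of the integral as the constant $\1_s(x)$, which gives
$$P_K(\1_s u)(x)=\1_s(x)\int_K u(a^{-1}x)\,d\mu(a)=\1_s(x)\,P_Ku(x).$$
Note that $\1_s u\in A(G)$ by Host's idempotent theorem, as in the proof of Lemma~\ref{lem:3}, because $s$ is open and closed. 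Hence both sides are defined in $A(G)$.

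The key step is the invariance of $s$ under left multiplication by $K$. Since $K\subseteq H$, for $a\in K$ and $x\in Hx_i$ we have $a^{-1}x\in a^{-1}Hx_i=Hx_i$. Thus $a^{-1}s\subseteq s$ for all $a\in K$. Applying the same argument to $a^{-1}$ gives $a^{-1}s=s$, so $x\in s$ if and only if $a^{-1}x\in s$. This is the only point where the structure of $s$ as a union of right cosets of $H$ is used. Normality of $K$ in $H$ is not actually needed for this identity; only $K\subseteq H$ matters.

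I do not expect any real obstacle. The only care needed is measurability and integrability, and these are immediate because $\1_s$ is continuous (as $s$ is clopen) and $u$ is bounded and continuous on the compact group $K$. As an alternative check, one could also argue through Lemma~\ref{lem:2}'s representation $Pu(x)=\langle\lambda(x)\xi,Q\eta\rangle$, but the pointwise argument above is simpler and suffices.
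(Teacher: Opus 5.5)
Your pointwise argument is correct and is the paper's proof. Since $K\subseteq H$, each right coset $Hx_i$ is invariant under left multiplication by $K$, so $\1_s(a^{-1}x)=\1_s(x)$ and that factor comes out of the integral. You are also right that normality of $K$ plays no role.

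One side remark in your proposal is false: being clopen does not put $\1_s$ in $B(G)$. Host's idempotent theorem says $\1_E\in B(G)$ if and only if $E$ lies in the open coset ring, i.e.\ $E$ is a \emph{finite} Boolean combination of cosets of open subgroups. An infinite union of cosets of $H$ need not be of this form.

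For example, take $G=\mathbb{Z}$, $H=K=\{0\}$ and $s=\{0,1,2,\dots\}$. Then $A(\mathbb{Z})=\{(\hat f(n))_{n\in\mathbb{Z}}: f\in L^1(\mathbb{T})\}$, and multiplication by $\1_s$ is the Riesz projection, which does not map $L^1(\mathbb{T})$ into itself. So $\1_s u\notin A(G)$ for some $u\in A(G)$.

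The lemma should therefore be read in one of two ways:
\begin{itemize}
\item pointwise, with $P_K$ given by the integral formula \eqref{def:P_K} applied to the bounded continuous function $\1_s u$, which is exactly what your computation proves; or
\item with $s$ restricted to a finite union of cosets or an open subgroup, as in its use in Theorem~\ref{thm:main}, where $s=B$.
\end{itemize}
The paper's proof does not address this point either. It does not affect the computation itself.
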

\begin{proof}
Clearly $y\in Hx \Leftrightarrow a^{-1}y\in Hx$ for all $a\in K$, so 
$\1_s(a^{-1}y) = \1_s(y)$ for all $a\in K$. Thus,
$$P_K(\1_s u)(x)
= \int_K \1_s(a^{-1}x) u(a^{-1}x)\ d\mu(a)
= \1_s(x) \int_K u(a^{-1}x)\ d\mu(a)
= \1_s(x) P_Ku(x)$$
\end{proof}


\section{Main theorem}\label{sec:main}

\begin{lemma}\label{lem:I-lattice}\normalfont
Let $G$ be a LCG, and open subgroup $O\subseteq G$ be given by Theorem~\ref{thm:Gleason-Yamabe}. Let $\mathcal{K}$ be as in Lemma~\ref{lem:K-lattice} and $\mathcal{J}$ be as in Lemma~\ref{lem:J-lattice}.
Let us define a partial order on the set 
$\mathcal{I} = \{(B,K): B\in\mathcal{J},\hspace{2mm}K\in\mathcal{K}\}$ 
by 
$$(B_1,K_1)\leq (B_2,K_2) \Leftrightarrow B_1\subseteq B_2\textrm{ and } K_1\supseteq K_2.$$ 
$\mathcal{I}$ is a $\sigma$-join complete lattice with $(B_1,K_1)\vee (B_2,K_2) = (B_1\vee_\mathcal{J} B_2, K_1\cap K_2)$ and $(B_1,K_1)\wedge (B_2,K_2) = (B_1\cap B_2, K_1K_2)$. 
\end{lemma}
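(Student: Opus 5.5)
The plan is to realize $\mathcal{I}$ as the product lattice $\mathcal{J}\times\mathcal{K}$ with the coordinatewise order. The order on $\mathcal{I}$ is exactly the product of the order of $\mathcal{J}$ from Lemma~\ref{lem:J-lattice} and the order of $\mathcal{K}$ from Lemma~\ref{lem:K-lattice}, where $K_1\leq K_2$ means $K_1\supseteq K_2$. The proof then reduces to two facts:
\begin{itemize}
\item a product of two lattices is a lattice, with joins and meets computed coordinatewise;
\item a product of two $\sigma$-join complete posets is $\sigma$-join complete.
\end{itemize}

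First I would check that the relation is a partial order. Reflexivity, antisymmetry and transitivity hold coordinatewise, since $\subseteq$ and $\supseteq$ are partial orders.

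Next, the lattice operations. Take $(B_1,K_1),(B_2,K_2)\in\mathcal{I}$.
\begin{itemize}
\item By Lemma~\ref{lem:J-lattice}, $B_1\vee_\mathcal{J}B_2$ and $B_1\cap B_2$ lie in $\mathcal{J}$. The intersection still contains $O$, and $(B_1\cap B_2)/O\subseteq B_1/O$ is countable.
\item By Lemma~\ref{lem:K-lattice}, $K_1\cap K_2$ and $K_1K_2$ lie in $\mathcal{K}$.
\end{itemize}
So the proposed join and meet are elements of $\mathcal{I}$. The pair $(B_1\vee_\mathcal{J}B_2,K_1\cap K_2)$ is an upper bound of both elements. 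If $(B,K)$ is another upper bound, then $B\supseteq B_1,B_2$, hence $B\supseteq B_1\vee_\mathcal{J}B_2$ because the latter is the smallest subgroup containing both. Likewise $K\subseteq K_1,K_2$, hence $K\subseteq K_1\cap K_2$. So the pair is the least upper bound. The meet is handled dually: a lower bound $(B,K)$ has $B\subseteq B_1\cap B_2$ and $K\supseteq K_1,K_2$. Since $K_1K_2$ is the smallest subgroup containing $K_1$ and $K_2$ (it is a subgroup because the $K_i$ are normal in $O$), we get $K\supseteq K_1K_2$.

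For $\sigma$-join completeness, let $(B_n,K_n)_{n\in\N}$ be increasing in $\mathcal{I}$. Then $(B_n)$ is increasing in $\mathcal{J}$ and $(K_n)$ is decreasing as sets. By Lemma~\ref{lem:J-lattice}, $B=\bigcup_n B_n\in\mathcal{J}$, and by Lemma~\ref{lem:K-lattice}, $K=\bigcap_n K_n\in\mathcal{K}$. Here I use that the supremum constructed there is the intersection. The pair $(B,K)$ is an upper bound, and any upper bound $(B',K')$ satisfies $B'\supseteq\bigcup B_n$ and $K'\subseteq\bigcap K_n$, so $(B,K)=\sup_n(B_n,K_n)$.

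I expect no real obstacle, since everything is inherited from the two earlier lemmas. The only points to state carefully are:
\begin{itemize}
\item $\mathcal{J}$ is closed under intersection and $\mathcal{K}$ is closed under products;
\item these operations are genuinely the lattice operations of their respective lattices;
\item the suprema produced in Lemmas~\ref{lem:K-lattice} and \ref{lem:J-lattice} are the intersection and the union, as their proofs show.
\end{itemize}
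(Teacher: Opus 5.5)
Your proposal is correct and follows the paper's route. The paper identifies $\mathcal{I}$ with $\mathcal{J}\times\mathcal{K}$ under the product order and cites the general fact (Gr\"atzer) that a Cartesian product of $\sigma$-join complete lattices is again one; you verify that fact by hand, and also check points the paper leaves implicit, such as $B_1\cap B_2\in\mathcal{J}$ and that $K_1K_2$ is the least subgroup containing $K_1$ and $K_2$.
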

\begin{proof}
$\mathcal{K}$ and $\mathcal{J}$ are $\sigma$-join complete lattices by Lemmas~\ref{lem:K-lattice} and \ref{lem:J-lattice}. The cartesian product of two $\sigma$-join complete lattices is again a $\sigma$-join complete lattice \cite{Gratzer78}.
\end{proof}

\begin{theorem}\label{thm:main}\normalfont
Let $G$ be LCG. 
Let $\mathcal{I}$ be as in Lemma~\ref{lem:I-lattice}.
For each $(B,K)\in\mathcal{I}$, define $P_{(B,K)}:A(G)\to A(G)$ by
\begin{equation}\label{def:P_BK}
P_{(B,K)}u(x) = \1_B(x) \int_K u(a^{-1}x)\ d\mu(a)
\end{equation}
where $\mu$ is the Haar measure on $K$.
Then, $\{P_i : i\in\mathcal{I}\}$ is a commutative projectional skeleton such that $P^*_i$ is a conditional expectation for each $i\in\mathcal{I}$.
\end{theorem}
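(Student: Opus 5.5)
Write $P_{(B,K)} = P_B P_K$, where $P_B u = \1_B u$ is as in Lemma~\ref{lem:3}, applied to the open subgroup $H=O\subseteq B$ with $B$ a union of right $O$-cosets; note $B$ is itself open. The other factor $P_K$ is as in \eqref{def:P_K}. Since $K\subseteq O\subseteq B$, Lemma~\ref{lem:4} applies with $H=O$ and $s=B$, so $P_BP_K=P_KP_B$. Both factors are contractive projections, so $P_{(B,K)}$ is a contractive projection. Its range is $\1_B A(G)^K$, which under Lemma~\ref{lem:3}\eqref{itm:lem3_1} and Lemma~\ref{lem:2} is isometric to $A(B)^K$. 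Since $K$ is normal in $O$ but only $N_B(K)\supseteq O$, we regard this as functions on $B$ constant on the cosets $Kx$. Each such coset lies in a right coset $Ox$, and on $Ox$ the function $u(\cdot\, x^{-1})$ factors through $O/K$, which is second countable. Hence the range embeds isometrically into the $\ell^1$-sum over the countably many cosets in $O\backslash B$ of copies of $A(O/K)$, by Lemma~\ref{lem:3}\eqref{itm:lem3_3} together with translation. Separability of $A(O/K)$ then gives (1).

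\textbf{Commutativity and (2).} For $i=(B_1,K_1)$ and $j=(B_2,K_2)$, multiplication operators commute with each other and, by Lemma~\ref{lem:4}, with $P_{K_1}$ and $P_{K_2}$. The operators $P_{K_1}$ and $P_{K_2}$ commute with product $P_{K_1K_2}$ by Lemma~\ref{lem:2}, since both subgroups are normal in $O$. Also $\1_{B_1}\1_{B_2}=\1_{B_1\cap B_2}$. Hence $P_iP_j=P_{(B_1\cap B_2,K_1K_2)}=P_{i\wedge j}$ by Lemma~\ref{lem:I-lattice}, and (2) follows as a special case.

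\textbf{(3).} Let $(B_n,K_n)$ be increasing with supremum $(\bigcup B_n,\bigcap K_n)$. The sequence is uniformly bounded by 1, so it suffices to check convergence on a dense set, say on $u$ with compact support. Such a support meets only finitely many $O$-cosets, so $\1_{B_n}u=\1_B u$ eventually. For the $K$ part, $P_{K_n}u\to P_Ku$ pointwise by Lemma~\ref{lem:Haar-limit}. We need norm convergence. On the coset $O$ (and each translate), $P_{K_n}u$ is the image of $P_Ku$ under projections onto $A(O/K_n)$. So we argue in $A(O/K)$, a separable space: $P_{K_n}$ correspond to averaging over $K_n/K$, which decreases to $\{e\}$. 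Write $u=\xi*\tilde\eta$ with $u(x)=\langle\lambda(x)\xi,\eta\rangle$. Then $P_{K_n}u(x)=\langle\lambda(x)\xi,Q_n\eta\rangle$ with $\|P_{K_n}u-P_Ku\|\le\|\xi\|\,\|Q_n\eta-Q\eta\|$. The $Q_n=\int_{K_n}\lambda\,d\mu_n$ form a decreasing sequence of orthogonal projections, which converge strongly to the projection onto $\bigcap_n L^2(G)_{K_n}=L^2(G)_{K}$, equal to $Q$. This strong convergence also handles the WLOG subnet in Lemma~\ref{lem:Haar-limit}. This is the cleanest route and I expect it to be the main technical point.

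\textbf{(4) and conditional expectations.} The same Hilbert space argument over the net $\mathcal{K}$ applies, since $Q_K$ decreases to the projection onto vectors fixed by $\bigcap\mathcal{K}$. By Theorem~\ref{thm:Gleason-Yamabe} every identity neighbourhood contains some $K\in\mathcal{K}$, so $\bigcap\mathcal{K}=\{e\}$ and $Q_K\to I$ strongly. Moreover $\bigcup\mathcal{J}=G$, since $\{g\}\cup O$ generates a member of $\mathcal{J}$, and $\1_B u\to u$ for compactly supported $u$. Finally, $P_i^*=P_K^*P_B^*$ is a normal contractive projection onto $P_i^*(VN(G))$. By Tomiyama's theorem it suffices to show this range is a von Neumann subalgebra. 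We have $P_B^*(\lambda(x))=\1_B(x)\lambda(x)$, and $P_K^*(\lambda(x))=\lambda(x)Q$, where $Q$ is the Haar average over $K$, with $Q$ central relative to $\lambda(O)$. The range is then the weak$^*$ closed span of $\lambda(b)Q$ for $b\in B$. It is closed under adjoints by $Q=Q^*$, and under products provided $\lambda(b)Q\lambda(b')Q$ stays in it. That requires $Q$ to commute with $\lambda(b')$, i.e.\ $b'\in N_G(K)$, which holds for $b'\in O$. For general $b'\in B$ this is the delicate point. I would handle it by checking the multiplicativity via the Tomiyama criterion directly, or by restricting attention to cosets, and I expect this verification to be the main obstacle.
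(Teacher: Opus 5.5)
\textbf{Verdict: genuine gap.} Your separability argument and your proof of $P_iP_j=P_{i\wedge j}$ follow the paper: factor $P_{(B,K)}=\1_B P_K$, commute via Lemma~\ref{lem:4} and Lemma~\ref{lem:2}, decompose over $O\backslash B$ and translate. The gap is the conditional-expectation claim. You leave it open, and it cannot be closed for the index set $\mathcal{I}$ as given.

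First, the multiplication is on the left. From $\langle P_K^*\lambda(y),u\rangle=\int_K u(a^{-1}y)\,d\mu(a)$ one gets $P_K^*(T)=QT$ with $Q=\lambda(\mu_K)$. Hence $P^*_{(B,K)}(T)=Q\,P_B^*(T)$, where $P_B^*$ is the canonical expectation onto $VN(B)$, and the range is $Q\,VN(B)=\{T\in VN(B):QT=T\}$. This is a weak$^*$-closed right ideal of $VN(B)$, so closure under products is automatic; in your right-multiplication convention $\lambda(b)Q\lambda(b')Q\in VN(B)Q$ is equally trivial. What can fail is closure under adjoints, which you took for granted ``by $Q=Q^*$''. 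It holds iff $Q$ is central in $VN(B)$, i.e.\ iff $bKb^{-1}=K$ for all $b\in B$. But $\mathcal{K}$ only demands that $K$ be normal in $O$.

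A concrete counterexample: let $G=(\prod_{\mathbb{Z}}\mathbb{Z}_2)\rtimes\mathbb{Z}$ with the shift, $O=\prod_{\mathbb{Z}}\mathbb{Z}_2$, $K=\{x:x_0=0\}$, $B=G$, and $s$ the generator of $\mathbb{Z}$. Then $(G,K)\in\mathcal{I}$ and $Q\lambda(s)$ lies in the range. However, $Q\lambda(s^{-1})Q=\lambda(s^{-1})\lambda(\mu_{sKs^{-1}}*\mu_K)=\lambda(s^{-1})\lambda(\mu_O)\neq\lambda(s^{-1})Q$, so $(Q\lambda(s))^*$ does not. Since a positive projection has self-adjoint range, $P^*_{(G,K)}$ is not even positive. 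The paper's proof simply asserts that $\{T:\textrm{supp}(T)\subseteq B,\ T=\lambda(a)T\ \forall a\in K\}$ is a von Neumann subalgebra and invokes Tomiyama, overlooking exactly this point.

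To rescue the statement, restrict to pairs in which $B$ normalizes $K$. Since $bKb^{-1}$ depends only on $bO$, the core $\bigcap_{bO\in B/O}bKb^{-1}$ is a countable intersection of compact $G_\delta$ subgroups, so $O$ modulo it is still second countable. One checks that this subfamily is closed under $\wedge$, $\sigma$-complete and cofinal in $\mathcal{I}$. Then $Q$ is a central projection of $VN(B)$, the range $VN(B)Q$ is a von Neumann algebra, and Tomiyama applies.

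\textbf{Monotonicity in (3)--(4).} Your Hilbert-space estimate $\|P_{K_n}u-P_Ku\|\le\|\xi\|\,\|Q_n\eta-Q\eta\|$ is a better route than the paper's, which jumps from weak$^*$ convergence $\mu_n\to\mu$ to norm convergence in $A(G)$ without comment. However, you have the monotonicity backwards. Since $K_{n+1}\subseteq K_n$, the fixed spaces $L^2(G)_{K_n}$ \emph{increase}; in particular $\bigcap_n L^2(G)_{K_n}=L^2(G)_{K_1}$, not $L^2(G)_K$. So $Q_n$ increases strongly to the projection onto $\overline{\bigcup_n L^2(G)_{K_n}}$, and identifying this with $L^2(G)_K$ is precisely what has to be proved.

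It follows from compactness. We have $Q_n\le Q$, so $Q_n\xi=Q_nQ\xi$ and it suffices to take $\xi\in L^2(G)_K$. For such $\xi$, the open set $\{y:\|\lambda(y)\xi-\xi\|<\varepsilon\}$ contains $K=\bigcap_n K_n$, hence contains $K_n$ for large $n$. This gives $\|Q_n\xi-\xi\|\le\varepsilon$.

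Similarly in (4), $Q_K$ increases as $K$ shrinks. What you need is $\|Q_K\xi-\xi\|\le\sup_{y\in K}\|\lambda(y)\xi-\xi\|$, combined with the fact you already cite: every identity neighbourhood contains some $K\in\mathcal{K}$.
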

\begin{proof}
Let $Q_K:L^2(G)\to L^2(G)$ be defined as in \eqref{def:Q}. Let $P_K:A(G)\to A(G)$
$$P_Ku(x) = \langle \lambda(x)\xi,Q_K\eta\rangle$$
where $u(x) = \langle \lambda(x)\xi,\eta\rangle$ and $\xi,\eta\in L^2(G)$. Then,
$$P_{(B,K)}u(x) 
= \1_B(x) P_Ku(x)
$$
$P_K$ is a contractive projection onto $A(G)^K$ by Lemma~\ref{lem:2}. 
On the other hand, $B$ is an open subgroup of $G$, so $\1_B\in B(G)$ by Host's idempotent theorem \cite{Host86} and $\|\1_B\|_{B(G)}=1$ by \cite[Theorem 2.1]{IlieSpronk05}. Since $A(G)$ is an ideal in $B(G)$, $$P_{(B,K)}(A(G)) = \1_B A(G)^K\subseteq A(G).$$  
$\|P_{(B,K)}u\|\leq\|\1_B\|\|P_Ku\|\leq\|u\|$ so $\|P_{(B,K)}\|=1$.

Next, by Lemma~\ref{lem:3}.\ref{itm:lem3_3},
$$P_{(B,K)}(A(G)) = \big(\bigoplus_{c\in O\backslash B} \1_c A(G)^K \big)_{\ell^1}$$
is separable since the set of right cosets $O\backslash B$ is countable and each summand above is separable. Indeed,
let $R_a:A(G)\to A(G)$, $R_au(x) = u(xa^{-1})$ denote the right translation. For $c=Oa$, the restriction of $R_a$ is an isometric linear isomorphism from $\1_O A(G)^K$ onto $\1_c A(G)^K$, $\1_O A(G)^K$ is isometrically isomorphic to $A(O)^K$ by Lemma~\ref{lem:3}.\ref{itm:lem3_1}, and $A(O)^K$ is isometrically isomorphic to $A(O/K)$ by Lemma~\ref{lem:2}. $A(O/K)$ is separable since $O/K$ is second countable.

Second, for $(B_1,K_1), (B_2,K_2)\in\mathcal{I}$
\begin{eqnarray*}
P_{(B_1,K_1)}P_{(B_2,K_2)}u
&=& \1_{B_1}P_{K_1}(\1_{B_2}P_{K_2}u) \\
&=& \1_{B_1}\1_{B_2} P_{K_1}P_{K_2}u \\
&=& \1_{B_1\cap B_2} P_{K_1K_2}u
\end{eqnarray*}
by Lemma~\ref{lem:4} and Lemma~\ref{lem:2}. Thus, $P_{(B_1,K_1)}$ and $P_{(B_2,K_2)}$ commute. In particular, 
$$(B_1,K_1)\leq (B_2,K_2) \Rightarrow 
P_{(B_1,K_1)}P_{(B_2,K_2)} = P_{(B_1,K_1)} = P_{(B_2,K_2)}P_{(B_1,K_1)}.$$

Third, if $((B_n,K_n))_{n\in\N}$ is an increasing chain in $\mathcal{I}$, let 
$$B=\bigcup_{n\in\N}B_n \hspace{3mm}\textrm{and}\hspace{3mm} K=\bigcap_{n\in\N}K_n .$$
$B\in\mathcal{J}$ by Lemma~\ref{lem:J-lattice} and $K\in\mathcal{K}$ by Lemma~\ref{lem:K-lattice}.
If $\mu_n$, $\mu$ are the normalized Haar measures of $K_n$ and $K$, respectively, then $\mu_n\to\mu$ in the weak$^*$ topology of $M(K_0)$ by Lemma~\ref{lem:Haar-limit}. 
Thus, for each $u\in A(G)$
$$
\lim_{n\to\infty} P_{(B_n,K_n)}u
= \lim_{n\to\infty} \1_{B_n} P_{K_n}u
= \1_B P_{K}u
$$

Fourth, $\bigcap_{K\in\mathcal{K}}=\{0\}$, so the net $(\mu_K)_{K\in\mathcal{K}}$ is weak$^*$ convergent to Dirac $\delta_0$ by Lemma~\ref{lem:Haar-limit}. Thus, $\displaystyle\lim_{K\in\mathcal{K}}P_Ku = u$, and so
$$
\lim_{(B,K)\in\mathcal{I}} P_{(B,K)}u
= \lim_{(B,K)\in\mathcal{I}} \1_B P_Ku
= \1_{G} u
= u
$$
for each $u\in A(G)$.

Finally, the range of $P^*_{(B,K)}$ is
\begin{eqnarray*}
P^*_{(B,K)}(VN(G)) 
&=& \{T\in VN(G):\textrm{supp}(T)\subseteq B,\hspace{2mm}T=\lambda(\mu)T\}\\
&=& \{T\in VN(G):\textrm{supp}(T)\subseteq B,\hspace{2mm}T=\lambda(a)T\hspace{2mm} \forall a\in K\}
\end{eqnarray*} 
which is a von Neumann subalgebra of $VN(G)$. Thus, $P^*_{(B,K)}$ is a conditional expectation by Tomiyama's theorem \cite{Tomiyama57}.
\end{proof}

\bibliography{vn1.bbl}

\end{document}